\newcommand{\zsmargin}[2]{{\color{blue}#1}\marginpar{\color{blue}\raggedright\footnotesize [ZS]:
#2}}
\def\Pr{\mathop{\rm Pr}\nolimits} 
\newtheorem{theorem}{Theorem}
\newtheorem{lemma}{Lemma}
\newtheorem{assumption}[theorem]{Assumption}
\title{\LARGE \bf Risk Aversion in  Finite Markov Decision Processes \\Using Total Cost Criteria
and Average Value at Risk}
\author{  
Stefano Carpin \and Yin-Lam Chow \and Marco Pavone \thanks{
 Y.-L. Chow and M. Pavone
are with Stanford University, Stanford, CA, USA.
 S. Carpin is with the School of Engineering, University of California-Merced, CA, USA.
\indent S. Carpin is partially supported by the Army Research Lab under contract  MAST-CNC-15-4-4. Y-L. Chow is partially supported by The Croucher Foundation doctoral scholarship.
M. Pavone is partially supported by the Office of Naval Research, Science of Autonomy Program, under Contract N00014-15-1-2673.
 Any opinions, findings, and conclusions or recommendations expressed in these materials are those of
 the authors and should not be interpreted as representing the official policies, either expressly or
 implied, of the funding agencies of the U.S. Government.  \newline
}
}
\begin{document}
\maketitle \thispagestyle{empty} \pagestyle{empty}
\begin{abstract} 
In this paper we present an algorithm to compute risk averse
policies in Markov Decision Processes (MDP) when the total cost 
criterion is used together with the average value at risk (AVaR) metric.
Risk averse policies are needed when large deviations from the 
expected behavior may have detrimental effects, and conventional MDP algorithms
usually ignore this aspect.
We provide conditions for the structure of the underlying 
MDP ensuring that approximations for the exact problem 
can be derived and solved efficiently.
Our findings are novel inasmuch as average value at risk 
has not previously been considered in association with the total cost criterion.
Our method is demonstrated in a rapid deployment scenario,
whereby a robot is tasked with the objective of reaching a target
location within a temporal deadline where increased speed is associated
with increased probability of failure. We demonstrate that
the proposed algorithm not only produces a risk averse policy reducing
the probability of exceeding
the expected temporal deadline, but also provides the statistical distribution of costs,
thus offering a valuable analysis  tool.
\end{abstract}

\section{Introduction}
Markov Decision Processes (MDPs) are extensively used to solve 
sequential stochastic decision making problems in robotics 
\cite{ProbabilisticRobotics} and other disciplines \cite{HanbookMDP}. 
A solution to an MDP problem instance provides a policy mapping
states into actions with the property of optimizing  (e.g., minimizing) 
in expectation a given objective function.
 In many practical situations a formulation
based on expectation only is, however, not sufficient. 
This is the case, in particular, when variability in the system's behavior can cause a highly undesirable outcome.
For example, in an autonomous navigation system,
a robot attempting to minimize the expected length of the 
traveled path will likely travel close to obstacles, and
 a large deviation from the planned
path may result in a collision causing a huge loss (e.g., damage to an expensive robot
or failure of the whole mission altogether).
Metrics that study deviations from the expected value are often referred to as {\em risk metrics}. 

Typical
solution algorithms for MDP problems, like value iteration or policy iteration,
aim exclusively at optimizing the expected cost. The computed policies are therefore labeled as  {\em risk neutral}.
The problem of quantifying risk associated with random variables has a rich history
and is often related to the problem of managing financial assets \cite{artzner1999coherent}. In fact, many risk-related studies
motivated by financial problems 
have recently found applications in domains such as robotics \cite{serraino2013conditional}. The term {\em risk aversion}
refers to a preference for stochastic realizations with limited deviation from the 
expected value. In risk averse optimal control one may prefer a policy with higher cost in expectation 
but lower deviations to one with lower cost but possibly larger deviations.
Particularly in the context of robotic planning, introducing risk aversion in MDPs is crucial to guarantee mission safety. However introducing risk aversion in MDPs creates a number of additional theoretical and computational hurdles. For example, in risk averse
MDPs, optimal policies are not guaranteed to be Markov stationary but are instead history dependent.

Average Value at Risk (AVaR -- also known as Conditional Value at Risk or CVaR) is a  risk metric that has gained notable popularity in the area of risk averse control \cite{artzner1999coherent}, \cite{RockafellarCVAR}. For a given 
random value and a predetermined confidence level, the AVaR  is the \emph{tail average} 
of the distribution exceeding a given confidence level (see Section \ref{sec:preliminaries}
for a formal definition).
Risk averse policies considering the AVaR metric have been studied for the case of MDPs with finite horizon
and discounted infinite horizon cost criteria.
In this paper we instead consider how  the AVaR  metric can be applied 
when an undiscounted, total cost criterion is considered. In fact, such cost criterion appears particularly useful and natural for robotic applications, whereby one is usually interested in optimizing the {\em undiscounted, total cost} accrued during a mission until a {\em random}, mission-dependent stopping time. (As an aside, the total cost criterion is the typical cost model for stochastic shortest path problems, see, e.g., \cite{DPB-JNT:91,HY-DPB:12}.)
The contribution of this paper is three-fold:

\begin{itemize}
\item We identify conditions for the underlying MDP ensuring that 
the AVaR MDP problem is well defined when the total cost criterion is used.
\item We define a surrogate MDP problem that can be efficiently solved and whose solution approximates the optimal policy for the original problem with arbitrary
precision.
\item We validate our findings on a rapid robotic deployment task where the objective is to maximize the mission success rate under a given temporal deadline \cite{CarpinASME2014,carpinIROS2014}.
\end{itemize}

The rest of the paper is organized as follows. We discuss related work in Section \ref{sec:related} and provide some background about risk metrics and MDPs in Section \ref{sec:preliminaries}. In Section \ref{sec:formulation}
we formulate the risk-averse, total cost MDP problem we wish to solve.
In Section \ref{sec:risk_aversion} we propose and analyze an approximation strategy for the problem,
 and in Section \ref{sec:AVAR} we provide an algorithmic solution. Simulation results for a rapid deployment problem are given in Section~\ref{sec:simulations},
and conclusions and future work are discussed in Section~\ref{sec:conclusions}.

\section{Related Work}\label{sec:related}
For a general introduction to MDPs the reader is referred to textbooks such as \cite{BertsekasDPVol12} or more recent collections
such as \cite{HanbookMDP}. As pointed out in the introduction,  risk aversion in MDPs has been studied for over four decades, with earlier efforts focusing on exponential utility \cite{Howard1972Risk}, mean-variance \cite{sobel_variance_1982}, and percentile risk criteria \cite{Filar95PP}. 
With regard to mean-variance optimization in MDPs, it was recently 
shown that computing an optimal policy under a variance constraint is  NP-hard \cite{mannor2013}.
Recently,  average value at risk was introduced in \cite{RockafellarCVAR} in order to model the tail risk of a random outcome and to address some key limitations of the prevailing value-at-risk metric. 
Efficient methods to compute AVaR are discussed  in \cite{RU2002}. 
Leveraging the recent strides in AVaR risk modeling, there have been a number of efforts aimed at embedding the AVaR risk metric into risk-sensitive MDPs. In \cite{Ott2011} the authors address the problem of minimizing the AVaR of the discounted cost over a finite and an infinite horizon, and propose a dynamic programming approach based on state augmentation. Similar techniques can be found in \cite{borkar2014risk},
where the authors propose a dynamic programming algorithm for finite-horizon, AVaR-constrained MDPs. The algorithm is proven to asymptotically converge to an optimal risk-constrained policy. However, the algorithm involves computing  integrals over continuous variables (Algorithm 1 in \cite{borkar2014risk}) and, in general, its implementation appears quite challenging. 
A different approach is taken by \cite{prashanth2014policy,tamar2015optimizing,chow2014cvar}
where the authors consider a finite dimensional parameterization of the control  policies, and show that
  an AVaR MDP can be optimized to a 
\emph{local} optimum using stochastic gradient descent (policy gradient). 
However this approach imposes additional restrictions to the policy space and in
 general policy gradient algorithms only converge to a local optimum.
Haskell and Jain recently considered the 
problem of risk aversion in MDPs using a framework based on occupancy measures \cite{Jain2014} (closely connected to our recent works where constrained MDPs
are used to solve the multirobot rapid deployment problem
 \cite{CarpinASME2014,carpinIROS2014}). 
While their findings are only valid for the case where an infinite horizon discounted cost
criterion is considered, the solution we propose
 uses some of the ideas introduced in \cite{Jain2014}.

\section{Preliminaries}\label{sec:preliminaries}
In this section we summarize some known concepts about risk metrics and MDPs. The reader is
referred to the aforementioned references for more details.
\subsection{Risk}
Consider a probability space $\mathcal{S}=(\Omega,\mathcal{F},\mathbb{P})$,
and let $L^{\infty}$ be the space of all essentially bounded random variables on $\mathcal{S}$.
A {\em risk function} (or risk metric) 
$\Gamma: L^{\infty} \rightarrow \mathbb{R}$ 
 is a function that maps an
uncertain outcome  $Y \in L^{\infty}$ onto the real line $ \mathbb{R}$. 
A risk function that is particularly popular in many financial applications is the {\em value at risk}.
For $\tau \in (0,1)$ the {\em value at risk} of $Y\in L^{\infty}$ at level $\tau$ is defined as
\[
\textrm{VaR}_{\tau}(Y) := \inf\{\eta \in \mathbb{R} : \Pr(Y\leq \eta) \geq \tau \}.
\] 
Here VaR$_\tau(Y)$ represents the percentile value of outcome $Y$ at confidence level $\tau$.
Despite its popularity, VaR$_\tau$ has a number of limitations. 
In particular, VaR is not a {\em coherent} risk measure~\cite{artzner1999coherent} and thus suffers from being unstable (high fluctuations under perturbations) when $Y$ is not normally distributed. More importantly it does not quantify the losses that might be incurred beyond its value in the $\tau$-tail of the distribution~\cite{RockafellarCVAR}.
An alternative measure that overcomes most shortcomings of VaR is the {\em average value at risk}, defined as
\[
\textrm{AVaR}_{\tau}(Y) := \frac{1}{1-\tau}\int_{\tau}^1 \textrm{VaR}_{t}(Y)dt,
\]
where $\tau \in (0,1)$ is the confidence level as before. Intuitively,  $\textrm{AVaR}_{\tau}$ is the expectation of $Y$ in the conditional distribution of its upper $\tau$-tail. For this reason, it can be interpreted as a metric of ``how bad is bad."  $\textrm{AVaR}_{\tau}$ can be equivalently written as \cite{RU2002}
\begin{equation}\label{eq:equivalent}
\textrm{AVaR}_{\tau}(Y) = \min_{s\in \mathbb{R}}\left\{s+\frac{1}{1-\tau} \mathbb{E}[(Y-s)^+]\right\},
\end{equation}
where $x^+:=\max(x,0)$.
This paper relies extensively on Eq. \eqref{eq:equivalent} and aims at devising efficient methods to approximate  the 
expectation in Eq. \eqref{eq:equivalent}  when the random variable $Y$ is the total cost of an MDP.
 
Furthermore, it has been recently shown in \cite{YC-AT-SM-MP:15} that optimizing the CVaR of total reward is equivalent to optimizing the worst-case (robust) expected total reward of a system whose model uncertainty is subjected to a trajectory budget. This finding corroborates the fact that a CVaR risk metric models both the variability of random costs, as well as the robustness to system transition errors.
  

\subsection{Total Cost, Transient Markov Decision Processes}
For a finite set $S$, let  $\mathbb{P}(S)$ indicate the set of mass distributions with support on 
$S$. A finite, discrete-time Markov Decision Process (MDP) is a tuple $\mathcal{M}=(X,U,\Pr,c)$ where
\begin{itemize}
\item $X$, the state space,  is a finite set comprising  $n$ elements.
\item $U$, the control space, is a collection of $n$ finite sets $\{U(x_i)\}_{i=1}^n$. Set $U(x_i)$, $i=1,\ldots, n$, represents the  actions that can be applied when in state  $x_i\in X$. The set of
allowable state/action pairs is defined as 
\[
\mathcal{K} := \{ (x,u) \in X\times U~|~u\in U(x) \}.
\]
\item $\Pr(y|x,u):\mathcal{K} \rightarrow \mathbb{R}$ is the transition probability from state $x$ to state $y$  when action $u \in U(x)$ is applied. According to our
 definitions, $\Pr(\cdot|x,u) \in \mathbb{P}(X)$.
\item $c: \mathcal{K}\rightarrow \mathbb{R}_{\geq 0}$ is a non-negative cost function. Specifically, $c(x,u)$ is the cost
incurred when executing action $u\in U(x)$ at state $x$.
\end{itemize}
Let  $\overline{K}:= \max_{(x,u)\in \mathcal{K}}c(x,u)$ and note that the maximum is attained as $\mathcal K$ is a finite set. 
Define the set $\mathcal{H}_t$ of admissible histories up to time
$t$ by $\mathcal{H}_t :=  \mathcal{K} \times  \mathcal{H}_{t-1}$, for $t\geq 1$, and $\mathcal{H}_0 := X$.
An element of $\mathcal{H}_t $  has the form $x_0,u_0,x_1,\dots,x_{t-1},u_{t-1},x_t$, and records all states traversed
and actions taken up to time $t$. In the most general case
 a policy is a function $\pi: \mathcal{H}_t \rightarrow \mathbb{P}(U(x_t))$,
i.e., it decides which action to take in state $x_t$ considering the
entire state-action history. Note that according to this definition a policy is
in general randomized. Let $\Pi$ be the set of all policies,
i.e., including history-dependent, randomized policies.
It is well
known that in the standard MDP setting where an expected cost is minimized
 there is no loss of optimality in restricting  
the optimization over deterministic, stationary Markovian policies, i.e., policies of the type $\pi:X\rightarrow U$.
However, in the risk-averse setting one needs to consider the more general class of history-dependent policies \cite{altman1999constrained}. This is achieved through a state augmentation process
described later.

Following  \cite{Jain2014}, we define the countable space $(\Omega,\mathbb{B}) := (\mathcal{K}^{\infty},\mathbb{B}(\mathcal{K}^{\infty}))$, where $\mathcal{K}^{\infty} = \mathcal{K}\times \mathcal{K} \times \mathcal{K} \times\cdots$, is the sample space and $\mathbb{B}(\mathcal{K}^{\infty})$ is the Borel field on $\mathcal{K}^{\infty}${}. Specific trajectories in the MDP  are written as $\omega \in \Omega$, and we denote by $x_t(\omega)$ and $u_t(\omega)$ the state
and actions at time $t$ along trajectory $\omega$. In general the exact initial state $x_0$ is unknown. Rather it is described by an 
initial mass distribution $\beta$ over $X$, i.e., $\beta \in \mathbb{P}(X)$.
A policy $\pi$ and initial distribution $\beta$ induce a probability distribution over $(\Omega,\mathbb{B})$,
that we will indicate as $\Pr_{\beta}^{\pi}$.

In this paper we focus on transient total cost MDPs, defined as follows. Consider
a partition of $X$ into sets $X^T$ and $M$, i.e., $X = X^T \cup M$ and  $X^T \cap M = \emptyset$. A \emph{transient} MDP is an MDP 
where  \emph{each} policy $\pi$ satisfies the 
following two properties:
\begin{itemize}
\item $\sum_{t=0}^{\infty}\Pr_{\beta}^{\pi}[x_t=x] < \infty$ for each $x\in X^T$, i.e., the state will eventually enter  set $M$, and 
\item $P(y|x,u) = 0$ for each $x\in M$, $y\in X^T$, $u\in U(x)$, i.e., once the state enters $M$ it cannot leave it.
\end{itemize}

A transient, \emph{total cost} MDP is a transient MDP where 
\begin{itemize}
\item $c(x,u) = 0$ for each $x \in M$, i.e., once the state enters $M$ no additional cost is incurred,
\item the cost associated with each trajectory $\omega$ is given by
\[
c(\omega) := \sum_{t=0}^{\infty} c(x_t(\omega),u_t(\omega)).  
\]
\end{itemize}
Note that the cost $c(\omega)$ is a random variable  depending on both the policy $\pi$ and the initial distribution $\beta$. The name total cost stems from the fact that an (undiscounted) cost is incurred throughout the ``lifetime" of the system (i.e., until the state hits the absorbing set).

Transient, total cost MDPs (closely related to stochastic shortest path problems, e.g., \cite{DPB-JNT:91,HY-DPB:12}) represent an alternative to the more commonly used
discounted, infinite-horizon MDPs or finite horizon MDPs. As outlined in the introduction, for 
many robotic applications the total cost, i.e., $c(\omega)$,  is the most appropriate 
cost function.  We justify this statement by noting that most robotic tasks have 
finite duration but such duration is usually not  known in advance. 
In these circumstances the finite horizon cost is inappropriate because
one cannot define the length of the finite horizon up front. Similarly,
the discounted infinite horizon cost is also ill suited because the task
does not continue forever and the cost will not be exponentially 
discounted over time.

Without loss of generality, we assume that set $M$ 
consists of a single absorbing state $x_M$ equipped with a single action $u_{x_M}$, i.e., $M=\{x_M\}$ and
$U(x_M) = u_{x_M}$ with $\Pr(x_M|x_M,u_{x_M})=1$. In the following, with a slight abuse of notation, we denote by  $\mathcal{K}$ the set $\{ (x,u) \in X^T\times U~|~u\in U(x) \}$,
i.e., we exclude the absorbing state from the definition of $\mathcal{K}$. 
Moreover, we assume that for a transient, total cost MDP 
$\beta(x_M)=0$, i.e., the probability of starting at the absorbing state 
is zero. In fact, whenever $x_0 = x_M$ the resulting state
trajectory will deterministically remain in $x_M$, 
and the corresponding cost is zero.

\section{Problem formulation}
\label{sec:formulation}
Our problem formulation relies on the following two technical assumptions
necessary to establish an a-priori upper bound
on the total cost incurred by any trajectory obtained under any policy, and to 
define an approximate problem that can be efficiently solved.
 
The first assumption simply requires that all costs in the transient
states are positive  (recall that we excluded $x_M$ when re-defining $\mathcal{K}$.)
As it will be shown later, this assumption ensures a non-zero discretization step 
when approximating the cumulative cost  accrued by a system throughout the 
trajectory $\omega$ until it is absorbed in $x_M$.
\begin{assumption}[Positivity of costs]\label{ass:nn}  
All costs in $\mathcal{M}$ except for state $x_M$ are positive and bounded,
i.e., $\underline{K} := \min_{(x,u)\in \mathcal{K}}c(x,u)>0$.
\end{assumption}

When considering cost criteria like finite horizon or discounted infinite 
horizon with a finite state space, an a-priori upper bound on the accrued cost can be immediately
established assuming that all costs are finite (a fact crucially exploited in \cite{Jain2014}). However, the situation is more complex
when considering the total cost case, because without introducing 
further hypotheses on the structure of the MDP a malicious adversary could
establish an history-dependent policy capable of invalidating any
a-priori established bound on the cost\footnote{First note that we are seeking
a uniform upper bound for all possible policies, including history 
dependent policies. Hence, given a tentative bound $B$, 
in the general case one could devise a 
history dependent policy ensuring that every trajectory generated by the policy is not absorbed in $x_M$
in less than $B/\underline{K}$ steps, thus invalidating the bound. }.
The second assumption then adds a ``global reachability structure" to the MDP problem.
To this end, in the following, it will be useful to consider the Markov 
Chain generated by the MDP when an input is selected for each state. 
For an MDP $\mathcal{M}$, select $u_1\in U(x_1), \dots, u_n\in U(x_n)$.
The selected inputs and the transition probabilities in $\mathcal{M}$ define
a finite Markov Chain that we indicate as $\mathcal{MC}_{u_1,  \ldots, u_n}$.
The state space of $\mathcal{MC}_{u_1, \ldots, u_n}$ is equal to $X$
and for two states $x_i,x_j \in X$ the transition probability $\Pr_{i,j}$ is
defined as $\Pr_{i,j} = \Pr(x_j|x_i,u_i)$ where $u_i\in U(x_i)$ is the input
selected in the definition of $\mathcal{MC}_{u_1,  \ldots, u_n}$ and $\Pr$ is the transition probability
of the associated MDP.
	
\begin{assumption}[Reachability of MDP]\label{ass:irr}   
Let $\mathcal{MC}_{u_1,  \ldots, u_n}$ be the Markov chain induced by the
$n$ inputs $u_i\in U(x_i)$.
Then the absorbing state $x_M$, under Markov chain  $\mathcal{MC}_{u_1,  \ldots, u_n}$, is reachable from any state $x\in X^T$, for all $u_1\in U(x_1),  \ldots, u_n\in U(x_n)$.
\end{assumption}
We recall that a state $j$ in a Markov chain is said reachable from another state $i$ if there exists an integer $k\geq 1$ such that the probability that the chain
will be in state $j$ after $k$ transitions  is positive \cite{gallager2013stochastic}.
Note that when Assumption \ref{ass:irr} holds, under every policy there is a 
path of non-zero probability connecting every state to $x_M$.
 Therefore, it is impossible to devise a policy that prevents sure absorption for 
 an arbitrary number of steps. This holds for all policies, including  history dependent policies (see Figure \ref{fig:Assumption2}).

\begin{figure}
\centering
\includegraphics[width=0.7 \columnwidth]{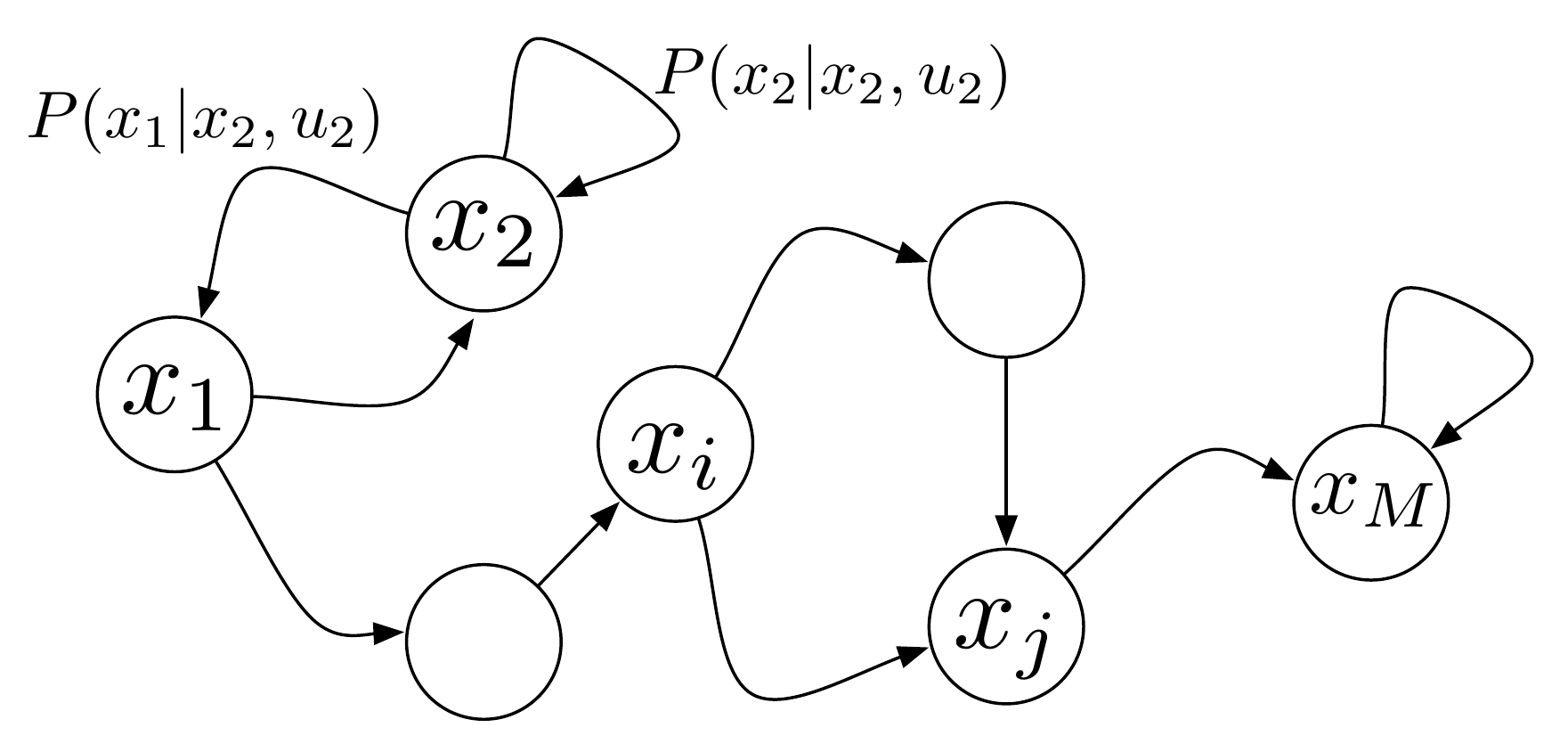}
\caption{Meaning of Assumption \ref{ass:irr}: after one input has been chosen for every state, an associated Markov Chain  
$\mathcal{MC}_{u_1,  \ldots, u_n}$ is defined.  In this Markov Chain, the absorbing state $x_M$ is reachable from every state,
i.e., at every state there is a path of non-zero probability to $x_M$. The probability of a path is given by the product of the probabilities
of its edges, i.e., the probability of the path $x_i \to x_j \to x_M$ is $\Pr(x_j|x_i,u_i)\Pr(x_M|x_j,u_j)$. This requirement is imposed 
for every possible choice of the inputs and every policy.}
\label{fig:Assumption2}
\end{figure}

Building upon the previous material, 
we can now define the problem we aim to solve in this paper:
\begin{quote}{\bf Risk-Averse, total cost MDP} -- Given a transient total cost MDP satisfying Assumptions \ref{ass:nn}-\ref{ass:irr}, and an initial distribution $\beta$, determine a policy 
$\pi$ that minimizes $\textrm{AVaR}_{\tau}(c(\omega))$, i.e., find
\begin{equation}\label{eq:optim}
\pi^{*} \in \text{arg} \min_{\pi \in \Pi} \textrm{AVaR}_{\tau}(c(\omega)).
\end{equation}
\end{quote}
Note that, under the assumption of transient total cost MDP,  one can easily verify that  $\mathbb{E}[c(\omega)]<\infty$. Since, by equation \eqref{eq:equivalent}, $\textrm{AVaR}_{\tau}(c(\omega)) \leq 1/(1-\tau) \mathbb{E}[c(\omega)]$, one obtains $\textrm{AVaR}_{\tau}(c(\omega))<\infty$  for all $\omega$, as well.
However, to derive an optimization algorithm for the computation of $\pi^{*} $
it is necessary to formulate an a-priori 
upper bound for the  optimal cost in \eqref{eq:optim}.
Assumptions \ref{ass:nn} and \ref{ass:irr} are introduced to ensure 
that such bound exists and can  be computed.

\section{Approximation Strategy}\label{sec:risk_aversion}
In this section we study an approximation strategy for the risk averse total cost MDP in equation \eqref{eq:optim}. 
Similar to the method presented in \cite{Jain2014}, we aim at solving the problem by using the concept of \emph{occupation measures}.
 However, unlike for the cases studied in \cite{Jain2014}, in total cost MDPs an explicit upper bound for the accrued cost is not 
available, which makes the solution strategy in \cite{Jain2014} not applicable. Our strategy is to find a surrogate to 
problem \eqref{eq:optim}. By imposing an effective horizon, we construct a total cost MDP with time-out and recast this 
 problem into a bilinear programming problem. Furthermore we characterize the  sub-optimality gap for such  surrogate approximation. We start with a technical result characterizing the convergence rate to the absorbing state.

\subsection{Convergence rate to the absorbing state}
Consider a selection of  inputs $u_1\in U(x_1), \ldots, u_n\in U(x_n)$ and the corresponding 
Markov chain $\mathcal{MC}_{u_1,  \ldots, u_n}$. For each state $x\in X^T$, let 
$\text{MinimumPath}_{x\to x_M}(\mathcal{MC}_{u_1,  \ldots, u_n})$ denote the simple (i.e., without cycles) path
 from $x$ to $x_M$ of lowest, strictly positive probability.
Note that  $\text{MinimumPath}_{x\to x_M}(\mathcal{MC}_{u_1,  \ldots, u_n})$
exists due to Assumption \ref{ass:irr}.
Let $\Pr(\text{MinimumPath}_{x\to x_M}(\mathcal{MC}_{u_1,  \ldots, u_n}))$ be 
the probability of the path, i.e., the product of the probabilities of
all the transitions along the path.
Since there are $n$ nodes and by definition the path is simple,
 $\text{MinimumPath}_{x\to x_M}(\mathcal{MC}_{u_1,  \ldots, u_n})$
includes at most $n-1$ transitions between $n$ nodes. 
Let
\[
\gamma := \min_{u_k \in U(x_k), \atop k=1\ldots,n}  \min_{x\in x^T}  \Pr(\text{MinimumPath}_{x\to x_M}(\mathcal{MC}_{u_1, \ldots, u_n})).
\]

%
%

Note that the minimum is achieved as the minimization is over a finite set,
 and that $\gamma$ is strictly positive due to Assumption \ref{ass:irr}. 
 The constant $\gamma$  lower bounds the probability that, under \emph{any} 
 policy $\pi\in\Pi$, the absorbing state is reached in no more than $n$ steps, 
 from \emph{any} state $x\in X^T$.
We are now in a position to characterize the convergence rate to the absorbing state.
\begin{lemma}[Number of stages to reach the absorbing set]\label{lem:tech1}

For any policy $\pi\in\Pi$ and initial distribution $\beta$,
\[
\Pr_{\beta}^{\pi} [x_{kn}\neq x_M]\leq(1 - \gamma)^k,\,\, \forall k\in \mathbb{N}.
\]

\end{lemma}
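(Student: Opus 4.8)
The plan is to prove the claim by induction on $k$, the content being carried entirely by the property of $\gamma$ recorded just above the lemma: for \emph{every} policy $\pi\in\Pi$ and \emph{every} transient state $x\in X^T$, the probability of reaching $x_M$ within $n$ steps when started at $x$ is at least $\gamma$. The base case $k=0$ is immediate: $\Pr_\beta^\pi[x_0\neq x_M]=1-\beta(x_M)=1=(1-\gamma)^0$, using $\beta(x_M)=0$. The only genuine subtlety lies in the inductive step, namely the history dependence of $\pi$, which I handle by conditioning on trajectory prefixes rather than on the current state alone.

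\emph{Inductive step.} Assume $\Pr_\beta^\pi[x_{kn}\neq x_M]\leq(1-\gamma)^k$. Since $x_M$ is absorbing, $\{x_{(k+1)n}\neq x_M\}\subseteq\{x_{kn}\neq x_M\}$. Applying the law of total probability over the (finitely many) admissible history prefixes $h_{kn}$ up to time $kn$, and noting that prefixes with $x_{kn}=x_M$ contribute zero,
\[
\Pr_\beta^\pi[x_{(k+1)n}\neq x_M]=\sum_{h_{kn}\,:\,x_{kn}\in X^T}\Pr_\beta^\pi[h_{kn}]\;\Pr_\beta^\pi\!\left[x_{(k+1)n}\neq x_M\mid h_{kn}\right].
\]
Fix such a prefix, ending in a state $x\in X^T$. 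Conditionally on $h_{kn}$, the process observed from time $kn$ onward is again the MDP $\mathcal{M}$ started at $x$ and run under an admissible policy — the restriction of $\pi$ to continuations of $h_{kn}$ — so the $\gamma$-property gives $\Pr_\beta^\pi[x_{(k+1)n}\neq x_M\mid h_{kn}]\leq 1-\gamma$. Substituting, together with $\sum_{h_{kn}:x_{kn}\in X^T}\Pr_\beta^\pi[h_{kn}]=\Pr_\beta^\pi[x_{kn}\neq x_M]$ and the inductive hypothesis,
\[
\Pr_\beta^\pi[x_{(k+1)n}\neq x_M]\leq(1-\gamma)\,\Pr_\beta^\pi[x_{kn}\neq x_M]\leq(1-\gamma)^{k+1},
\]
which closes the induction; in particular the $k=0$ instance of this step recovers $\Pr_\beta^\pi[x_n\neq x_M]\leq 1-\gamma$.

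\emph{Main obstacle.} The point demanding attention is precisely the uniform-over-policies factor $1-\gamma$ in the inductive step: for a history-dependent randomized $\pi$ one cannot reduce to a single fixed Markov chain, and must instead observe that every continuation of $\pi$ is itself an admissible policy, so that the $\gamma$-bound — asserted for \emph{all} policies — transfers verbatim. If one wished to make the $\gamma$-property self-contained rather than quoting it, the remaining work would be to argue that an adversary minimizing the $n$-step absorption probability from $x$ cannot improve on the probability of following some simple path from $x$ to $x_M$, and that by the definition of $\gamma$ together with Assumption~\ref{ass:irr} such a path exists and has probability at least $\gamma$.
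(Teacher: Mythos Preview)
Your proof is correct and follows essentially the same inductive scheme as the paper's: both use the uniform $\gamma$-bound on the $n$-step absorption probability to peel off a factor of $1-\gamma$ at each block of $n$ steps. Your treatment is in fact more careful than the paper's---by conditioning on full history prefixes $h_{kn}$ rather than merely on the event $\{x_{kn}\neq x_M\}$, you make explicit why the $\gamma$-bound transfers under history-dependent policies, a point the paper glosses over.
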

\begin{proof}
The claim is proven by induction on $k$. Base case: we prove that  
\[
\Pr_{\beta}^{\pi}[x_n\neq x_M] \leq 1- \gamma.
\]
Indeed, $\Pr_{\beta}^{\pi}[x_n\neq x_M] = \sum_{x\in X_T} \Pr_{\beta}^{\pi}[x_n\neq x_M|x_0 = x]\Pr_{\beta}^{\pi}[x_0 =x]$. Because of Assumption \ref{ass:nn}, for any policy $\pi$, $\Pr_{\beta}^{\pi}[x_n  =  x_M] \geq \gamma$, and the base case follows.

For the inductive step, assume that $\Pr_{\beta}^{\pi}[x_{kn}\neq x_M]<(1 - \gamma)^k$, for some $k>1$. Then, $\Pr_{\beta}^{\pi}[x_{(k+1)n}\neq x_M] = \Pr_{\beta}^{\pi}[x_{(k+1)n} \neq x_M |x_{kn}\neq x_M]\Pr_{\beta}^{\pi}[x_{kn}\neq x_M]$. By definition of $\gamma$, 
\[
\Pr_{\beta}^{\pi}[x_{(k+1)n} \neq x_M |x_{kn}\neq x_M] \leq (1-\gamma)^{k+1},
\]
and the claim follows.
\end{proof}

\subsection{Surrogate problem and approximation bounds}

Our solution strategy is to solve a {\em surrogate} problem, whereby after a
 deterministic number $d\in \mathbb{N}$ of steps, the state moves to the absorbing state $x_M$ surely. In other words, $d$ acts as a ``timeout" for the MDP 
 problem. 
 The surrogate problem is simpler to solve, and we will show in the following that
 its solution can approximate the solution of the original problem with
 arbitrary precision\footnote{An alternative strategy would be to investigate reductions of problem \eqref{eq:optim} to an equivalent risk-averse, discounted, infinite-horizon problem by using, e.g., the results recently presented in \cite{EAF-JH:15}, and then apply the approach in \cite{Jain2014} to the reformulated problem. This is an interesting  direction left for future research.}.
 Denote by $c^{[d]}(\omega)$ the total cost for such surrogate problem. 
 Additionally, for the original problem, let $t^*(\omega)$ denote the {\em absorbing 
 time}, i.e., the time at which   the state reaches $x_M$. If $t^*(\omega)\leq d$, then the two processes coincide and then
  $c^{[d]}(\omega)=c(\omega)$. 
 Otherwise, for each trajectory $\omega$ such that 
 $t^*(\omega)> d$, the random process is stopped after $d$ steps, and the state goes, 
 deterministically, to  $x_M$ at stage $d+1$. In such a case one has 
 $c^{[d]}(\omega)\leq c(\omega)$. 

We want to characterize the relation between $\textrm{AVaR}_{\tau}(c^{[d]}(\omega))$ (i.e., the risk for the surrogate problem) and $\textrm{AVaR}_{\tau}(c(\omega))$ (i.e., the risk for the original problem). To this end, let $c_d(\omega)$ be the total cost for the original problem up to time $d$, i.e., 
\[
c_d(\omega) := \sum_{t=0}^d\, c(x_t(\omega), u_t(\omega)).
\]
The following lemma shows the equivalence between $c^{[d]}(\omega)$ and $c_d(\omega)$.

%

 \begin{lemma}[Correspondence of costs]\label{lem:tech2}
For any policy $\pi \in \Pi$ and any trajectory $\omega$, $c^{[d]}(\omega) = c_{d}(\omega)$.
\end{lemma}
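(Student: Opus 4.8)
The plan is to prove the identity pathwise: fix an arbitrary policy $\pi \in \Pi$ and an arbitrary trajectory $\omega$, and exploit two structural facts. First, by construction the surrogate process is identical to the original process during the first $d$ stages and is modified only from stage $d+1$ onward, where the state is forced into $x_M$. Second, the running cost on the absorbing state vanishes, $c(x_M,u_{x_M})=0$, so no cost accrues once the state has entered $M$. Combining these, the surrogate total cost collapses to the sum of per-stage costs over stages $0,\dots,d$, which is precisely $c_d(\omega)$.

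Concretely, I would write $c^{[d]}(\omega)=\sum_{t=0}^{d} c\big(x_t(\omega),u_t(\omega)\big) + \sum_{t=d+1}^{\infty} c\big(x_t(\omega),u_t(\omega)\big)$, where on the right-hand side the pairs $(x_t,u_t)$ refer to the surrogate process. Every state-action pair in the first sum coincides with that of the original process, since the timeout acts only at stage $d+1$; every term in the second sum is zero, since the surrogate state equals $x_M$ for all $t\ge d+1$ and $c(x_M,u_{x_M})=0$. Hence $c^{[d]}(\omega)=c_d(\omega)$. As a consistency check, one can split on $t^*(\omega)$: if $t^*(\omega)\leq d$ the two processes coincide entirely, so $c^{[d]}(\omega)=c(\omega)$, and since the cost series of the original process has a zero tail beyond $t^*(\omega)$ one also gets $c(\omega)=\sum_{t=0}^{d}c(x_t(\omega),u_t(\omega))=c_d(\omega)$; if $t^*(\omega)>d$ the surrogate runs the original dynamics for stages $0,\dots,d$ and is then absorbed at zero cost, again giving $c_d(\omega)$.

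The only point requiring care is the index bookkeeping at the interface: one must check that the per-stage cost $c(x_d(\omega),u_d(\omega))$ incurred at the last ``real'' stage $d$ is counted (matching the definition of $c_d$, whose sum runs through $t=d$) and that the forced transition to $x_M$ occurs at stage $d+1$, so that no term is either dropped or double-counted. Beyond this, there is no substantive obstacle: the lemma is an immediate consequence of the definition of the surrogate MDP with timeout together with the zero-cost property of the absorbing state.
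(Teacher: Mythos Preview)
Your proposal is correct and follows essentially the same approach as the paper: both argue that the surrogate and original processes coincide up to time $d$, and that no further cost accrues afterward for either $c^{[d]}$ (absorption at $x_M$ with zero cost) or $c_d$ (by definition). Your write-up is simply more explicit about the index bookkeeping and the $t^*(\omega)$ case split than the paper's one-paragraph proof.
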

\begin{proof}
Given a policy $\pi$, for any trajectory $\omega$, the cost cumulated up to time $d$ is the same for both the original and the surrogate problem. After time $d$, both $c_{d}(\omega)$ and $c^{[d]}(\omega)$ do not cumulate any additional cost, then the claim follows.
\end{proof}

The following theorem represents the main result of this section
\begin{theorem}[Suboptimality bound]\label{them:sub}
The surrogate problem approximates the original problem according to 
\begin{equation*}
\begin{split}
& \min_{\pi} \textrm{AVaR}_{\tau}(c^{[d]}(\omega)) \leq  \min_{\pi} \textrm{AVaR}_{\tau}(c(\omega))\\
 &\qquad  \qquad \leq  \min_{\pi} \textrm{AVaR}_{\tau}(c^{[d]}(\omega)) +   \frac{n\overline{K}}{1 - \tau}\,  \frac{(1- \gamma)^{\lfloor (d+1)/n\rfloor}}{\gamma}.
 \end{split}
\end{equation*}

\end{theorem}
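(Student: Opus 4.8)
The plan is to establish the two inequalities separately: the left one is essentially free from the pointwise domination $c^{[d]}(\omega)\le c(\omega)$, while the right one requires lifting a surrogate-optimal policy back to the original problem and controlling the cost discarded by the timeout.

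For the left inequality, recall that for every policy and every trajectory $\omega$ one has $c^{[d]}(\omega)\le c(\omega)$ (noted just before the theorem). Since $\textrm{AVaR}_{\tau}$ is monotone --- which is immediate from \eqref{eq:equivalent}, because $Y\le Z$ a.s.\ gives $s+\frac{1}{1-\tau}\mathbb{E}[(Y-s)^+]\le s+\frac{1}{1-\tau}\mathbb{E}[(Z-s)^+]$ for every $s$, hence $\textrm{AVaR}_{\tau}(Y)\le\textrm{AVaR}_{\tau}(Z)$ after taking the infimum over $s$ --- we get $\textrm{AVaR}_{\tau}(c^{[d]}(\omega))\le\textrm{AVaR}_{\tau}(c(\omega))$ for each fixed $\pi$, and taking the infimum over $\pi$ on both sides yields $\min_{\pi}\textrm{AVaR}_{\tau}(c^{[d]}(\omega))\le\min_{\pi}\textrm{AVaR}_{\tau}(c(\omega))$.

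For the right inequality, let $\pi_d$ attain $\min_{\pi}\textrm{AVaR}_{\tau}(c^{[d]}(\omega))$ (if the minimum is not attained, take a $\delta$-optimal policy and send $\delta\to0$ at the end), and let $\pi$ be any policy coinciding with $\pi_d$ on stages $0,\dots,d$ and continued arbitrarily afterwards --- harmless, since the MDP is transient under every policy. The surrogate cost depends only on the decisions up to stage $d$, so $\textrm{AVaR}_{\tau}(c^{[d]}(\omega))$ under $\pi$ equals $\min_{\pi}\textrm{AVaR}_{\tau}(c^{[d]}(\omega))$, and by Lemma \ref{lem:tech2} it also equals $\textrm{AVaR}_{\tau}(c_d(\omega))$. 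Decompose the true cost under $\pi$ as $c(\omega)=c_d(\omega)+R(\omega)$ with residual $R(\omega):=\sum_{t=d+1}^{\infty}c(x_t(\omega),u_t(\omega))\ge0$. Plugging the minimizer $s^*$ of \eqref{eq:equivalent} for $c_d(\omega)$ into the variational formula for $c(\omega)$ and using $(a+b)^+\le a^++b$ for $b\ge0$ gives $\textrm{AVaR}_{\tau}(c(\omega))\le\textrm{AVaR}_{\tau}(c_d(\omega))+\frac{1}{1-\tau}\mathbb{E}[R(\omega)]$ (equivalently, one may invoke subadditivity of the coherent risk measure $\textrm{AVaR}_{\tau}$). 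It thus suffices to bound $\mathbb{E}[R(\omega)]$ uniformly over the extended policy.

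For the residual bound, use $c(x,u)=0$ at $x_M$ and $c(x,u)\le\overline{K}$ elsewhere to get $\mathbb{E}[R(\omega)]\le\overline{K}\sum_{t=d+1}^{\infty}\Pr_{\beta}^{\pi}[x_t\neq x_M]$. Since $x_M$ is absorbing, $t\mapsto\Pr_{\beta}^{\pi}[x_t\neq x_M]$ is non-increasing, so on each block $t\in[kn,(k+1)n)$ it is at most $\Pr_{\beta}^{\pi}[x_{kn}\neq x_M]\le(1-\gamma)^k$ by Lemma \ref{lem:tech1}; grouping the tail sum into such blocks and summing the geometric series (using $\lfloor t/n\rfloor\ge\lfloor(d+1)/n\rfloor$ for $t\ge d+1$) gives $\sum_{t=d+1}^{\infty}\Pr_{\beta}^{\pi}[x_t\neq x_M]\le n\,(1-\gamma)^{\lfloor(d+1)/n\rfloor}/\gamma$. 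Hence $\mathbb{E}[R(\omega)]\le n\overline{K}(1-\gamma)^{\lfloor(d+1)/n\rfloor}/\gamma$, so $\textrm{AVaR}_{\tau}(c(\omega))\le\textrm{AVaR}_{\tau}(c_d(\omega))+\frac{n\overline{K}}{1-\tau}\frac{(1-\gamma)^{\lfloor(d+1)/n\rfloor}}{\gamma}$; since $\textrm{AVaR}_{\tau}(c_d(\omega))=\min_{\pi}\textrm{AVaR}_{\tau}(c^{[d]}(\omega))$ and $\min_{\pi}\textrm{AVaR}_{\tau}(c(\omega))\le\textrm{AVaR}_{\tau}(c(\omega))$, the right inequality follows. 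The main obstacle is the word ``uniformly'': the residual estimate must hold for $\pi$ regardless of its continuation past stage $d$, which is exactly what the policy-independent constant $\gamma$ and Lemma \ref{lem:tech1} provide; the only delicate bookkeeping is the block decomposition converting the per-$n$-steps decay of Lemma \ref{lem:tech1} into the stated geometric bound.
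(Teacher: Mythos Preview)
Your proof is correct and follows essentially the same route as the paper: both the left and right inequalities rest on the decomposition $c(\omega)=c_d(\omega)+c_l(\omega)$, the subadditivity bound $(c_d+c_l-s)^+\le(c_d-s)^++c_l$, Lemma~\ref{lem:tech2} to identify $c_d$ with $c^{[d]}$, and Lemma~\ref{lem:tech1} together with the $n$-block decomposition to control $\mathbb{E}[c_l(\omega)]$ uniformly in $\pi$. The only cosmetic difference is that you fix a (near-)optimal surrogate policy $\pi_d$ and extend it, whereas the paper keeps $\pi$ generic and uses $\min_\pi\{A(\pi)+B(\pi)\}\le \min_\pi A(\pi)+\max_\pi B(\pi)$; the latter sidesteps the attainment/$\delta$-optimal discussion, but the substance is identical.
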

\begin{proof}
The left inequality is proven by noticing that $\min_{\pi} \textrm{AVaR}_{\tau}(c(\omega)) \geq \min_{\pi} \textrm{AVaR}_{\tau}(c_d(\omega)) = \min_{\pi} \textrm{AVaR}_{\tau}(c^{[d]}(\omega))$, where  the equality follows from Lemma \ref{lem:tech2}. 

We now prove the right inequality. For any $s\in \mathbb{R}$ and policy $\pi$, one has 
\begin{align}
 \mathbb{E}[&(c(\omega)-s)^+]=\mathbb{E}[(c(\omega)-s)^+\mid t^*(\omega)\leq d  ]\mathbb P(t^*(\omega)\leq d )\nonumber\\
 &+\mathbb{E}[(c(\omega)-s)^+\mid t^*(\omega)> d ]\mathbb P(t^*(\omega)> d ).\label{eq:decompose}
 \end{align}

Let $c_l(\omega) := \sum_{t=d+1}^{\infty} c(x_t(\omega),u_t(\omega))$ be the tail cumulated cost, and, as before, $c_d(\omega) := \sum_{t=0}^{d} c(x_t(\omega),u_t(\omega))$. Since the function $x\to x^+$ is sub-additive, i.e., $(x+y)^+\leq x^++y^+$ and  the expectation operator preserves monotonicity, one obtains the inequality 
\[
\begin{split}
\mathbb{E}[(c&(\omega)-s)^+\mid t^*(\omega)> d ] \\
&= \mathbb{E}[(c_d(\omega)+ c_l(\omega)-s)^+\mid t^*(\omega)> d ]\\
&\leq\mathbb{E}[(c_d(\omega)-s)^+ \mid t^*(\omega)> d ] +\mathbb{E}[c_l(\omega)  \mid t^*(\omega)> d ].
\end{split}
\] 
Furthermore,  for each  trajectory in the event set $\{\omega:t^*(\omega)\leq d \}$, one has
\[
\mathbb{E}[(c(\omega)-s)^+\mid t^*(\omega)\leq d  ]= \mathbb{E}\left[\left(c_d(\omega)-s\right)^+\mid t^*(\omega)\leq d  \right].
\] 
Collecting the results so far, one has the following inequalities:
\begin{equation}\label{eq:intermediate}
\begin{split}
&\mathbb{E}[(c(\omega)-s)^+]\\
& \leq \mathbb{E}[(c_d(\omega)-s)^+\mid t^*(\omega)\leq d  ]\mathbb P(t^*(\omega)\leq d )  \\
&\qquad + \mathbb{E}[(c_d(\omega)-s)^+ \mid t^*(\omega)> d ] \mathbb P(t^*(\omega)> d) + \\
&\qquad +  \mathbb{E}\left[c_l(\omega)\mid t^*(\omega)> d \right]\mathbb P(t^*(\omega)> d)\\
& = \mathbb{E}[(c_d(\omega)-s)^+ ]+\mathbb{E}\left[c_l(\omega)\mid t^*(\omega)> d \right]\mathbb P(t^*(\omega)> d)\\
& \leq \mathbb{E}[(c_d(\omega)-s)^+ ]+\mathbb{E}\left[c_l(\omega)\right].
\end{split}
\end{equation}

Equation \eqref{eq:intermediate} implies
\begin{equation*}\label{eq:intermediate_2}
\begin{split}
&\quad\min_{\pi} \textrm{ AVaR}_{\tau}(c(\omega))\\
& = \min_{\pi} \, \min_{s\in \mathbb{R}}\left\{s+\frac{1}{1-\tau} \mathbb{E}[(c(\omega)-s)^+]\right\}\\
& \leq \min_{\pi} \, \min_{s\in \mathbb{R}}\left\{s+\frac{1}{1-\tau} \left(\mathbb{E}[(c_d(\omega)-s)^+ ]+\mathbb{E}\left[c_l(\omega)\right]\right)\right\}\\
& \leq \min_{\pi} \, \min_{s\in \mathbb{R}}\left\{s+\frac{1}{1-\tau} \left(\mathbb{E}[(c_d(\omega)-s)^+ ]\right)\right\} \\
&\qquad +\max_{\pi} \, \frac{1}{1-\tau}\mathbb{E}\left[c_l(\omega)\right]\\
& = \min_{\pi} \, \min_{s\in \mathbb{R}}\left\{s+\frac{1}{1-\tau} \left(\mathbb{E}[(c^{[d]}(\omega)-s)^+ ]\right)\right\} \\
&\qquad +\max_{\pi} \, \frac{1}{1-\tau}\mathbb{E}\left[c_l(\omega)\right]\\
& =  \min_{\pi}  \textrm{ AVaR}_{\tau}(c^{[d]}(\omega)) + \max_{\pi} \, \frac{1}{1-\tau}\mathbb{E}\left[c_l(\omega)\right],
\end{split}
\end{equation*}
where the second to last equality follows from Lemma \ref{lem:tech2}. We are left with the task of upper bounding $\mathbb{E}\left[c_l(\omega)\right]$. To this purpose, one can write
\[
\mathbb{E}\left[c_l(\omega)\right]\leq \overline{K} \sum_{t=d+1}^{\infty} \, \Pr_{\beta}^{\pi}(x_t \neq x_M).
\]

Note  that  the result in Lemma \ref{lem:tech1} implies
\[
\begin{split}
\sum_{t=d+1}^{\infty}& \Pr_{\beta}^{\pi}[x_{t}\neq x_M] \leq\!\! \!\sum_{k=\lfloor (d+1)/n\rfloor}^{\infty}\!\!\!\!\sum_{t=kn}^{(k+1)n-1}\Pr_{\beta}^{\pi}[x_{t}\neq x_M]\\
&\leq \sum_{k=\lfloor (d+1)/n\rfloor}^{\infty} n(1-\gamma)^k
  = n \frac{(1- \gamma)^{\lfloor (d+1)/n\rfloor}}{\gamma}.
\end{split}
\]
The claim then follows immediately, as the above upper bound is policy-independent.
\end{proof}

Note that according to Theorem \ref{them:sub}, as $d\to \infty$, the optimal cost of the surrogate problem recovers the optimal cost of the original problem, i.e., the surrogate problem provides a consistent approximation to the original problem, with a sub optimality factor that is computable from problem data.

Lemma \ref{lem:tech2} and Theorem \ref{them:sub} ensure that $c^{[d]}(\omega)$ can  approximate $c(\omega)$ with arbitrary 
precision for a sufficiently large value of $d$. 
In the next section we then show how to solve the minimization problem: 
\begin{equation}\label{eq:toapprox}
\min_{\pi} \textrm{AVaR}_{\tau}(c^{[d]}(\omega)).
\end{equation}

\section{Solution Algorithm}\label{sec:AVAR}

Leveraging the surrogate problem from the previous section, we can now
adapt  the results proposed in \cite{Jain2014} to solve problem \eqref{eq:optim}.
An essential step to solve this optimization problem is to compute $\mathbb{E}[(c^{[d]}(\omega)-s)^+]$, which entails deriving 
the probability distribution for the possible costs generated by the random 
variable $c^{[d]}(\omega)$. This problem can be solved by suitably augmenting 
the state space as described in the following, and then using occupancy measures.
In the space of  occupancy measures, an optimal policy is determined
through the solution of a bilinear program, as explained below.
For a given policy $\pi$ and initial
distribution $\beta$, we define the occupancy measure for $(x,u)\in \mathcal{K}$ as
\[
\rho(x,u) = \sum_{t=0}^{\infty}\Pr{_{\beta}^{\pi}}[x_t(\omega)=x,u_t(\omega)=u)].
\]
Note that $\rho(x,u)$ is non negative but is in general not a probability itself.
In the following we will use occupancy measures to determine the probability distribution of the total costs $c^{[d]}(\omega)$ and then
to compute the needed expectation.
According to the definition,
occupancy measures depend on the policy $\pi$ and the initial distribution $\beta$. 
Given an absorbing MDP $\mathcal{M}=(X,U,\Pr,c)$, we define a new {\em state-augmented} absorbing MDP with  additional state components that 
track the cumulated total cost and current stage.
 Although the original MDP $\mathcal{M}$ is finite and absorbing, the set of 
costs $c^{[d]}(\omega)$ generated by all possible policies can be very large,
 and this can subsequently
lead to a linear program with an unmanageable number of decision variables.
To counter this problem, we introduce a  discretized approximation 
for $c^{[d]}(\pi,\beta)$ whose error can be arbitrarily bounded.
To this end, we set $
\zeta = \min \left\{ \underline{K},\frac{d\bar{K}}{N'}\right\}$,
where $N'\in \mathbb{N}$ is a parameter describing the desired number of discretized values for the cumulated cost. Due to Assumption \ref{ass:nn}, $\zeta$ is strictly positive.
The effective number of different values is
$
N = \left \lceil \frac{d\bar{K}}{\zeta} \right \rceil.
$
This value may be higher than $N'$ due to our definition of $\zeta$. 
We then define a new MDP
$\mathcal{M}'_N=(X',U',\Pr',c')$ as follows.
Its state space is $X'=X\times \mathbb{N}_{N} \times \mathbb{N}_{d}$, where $\mathbb{N}_N = \{0,1,\dots,N\}$ and  $\mathbb{N}_{d} = \{0,1,\dots,d\}$. 
Elements in the augmented states will be indicated as $(x,y,z)$.
As clarified in the following, the two additional components store the cumulated running cost ($y$) and current stage ($z$). Recall that in the surrogate problem, after $d$ steps, the state is guaranteed to have entered the absorbing set, i.e., it is guaranteed that  $x_d(\omega)=x_M$. 
Thus the value of the $z$ component 
is in $\mathbb{N}_{d} = \{0,1,\dots,d\}$. On the other hand the input
sets are defined as $U'(x,y,z)=U(x)$. $X'$ and $U'$ induce a new set $\mathcal{K}' = \{ (x,y,z,u) ~|~ (x,u)\in \mathcal{K} \wedge y\in \mathbb{N}_{N} \wedge z\in \mathbb{N}_{d} \}$.
The new cost function $c':\mathcal{K}' \rightarrow \mathbb{R}_{\geq 0}$ is $c'(x,y,z,u) = c(x,u)$. The transition probability function is modified as follows:
\begin{align}
& \Pr'((x',y',z')|(x,y,z),u)= \nonumber \\
&
\begin{cases}
\Pr(x'|x,u) &  \textrm{if} ~y' = y  + \left \lfloor \frac{c(x,u)}{\zeta}\right \rfloor \wedge z' =z+1\nonumber  \wedge z'<d\\
1 &  \textrm{if } (x',y',z') =(x_M,y,d) \wedge z=d\\
0 & \nonumber\textrm{otherwise} 
\end{cases}
\end{align}
As evident from the  definition of the new transition function, the new variables included in the state stores the discretized\footnote{To be precise, the discretized running cost is scaled by $\zeta$.} running cost and the stage. 
Consistently with our definition of the surrogate problem,
the revised transition function 
includes a timeout that imposes a transition to the absorbing state $x_M$ after $d$ steps, and
from that point onwards the accrued cost does not change. 
Note also that the additional state components $y$ and $z$
are deterministic functions of the previous state and control input $u$.
Extending the formerly introduced notation, for a given trajectory $\omega$ of $\mathcal{M}'_N$, we write $y_t(\omega)$ for the second component
of the state at time $t$ and $z_t(\omega)$ for the third component. 
Finally, for a given 
initial distribution $\beta$ on $X$, we define the following new initial distribution $\beta'$ on $X'$,
\[
\beta'(x,y,z)=
\begin{cases}
\beta(x) & \textrm{if} ~y=0 ~\wedge z=0\\
0 & \textrm{otherwise} 
\end{cases}.
\]

Note that the properties of $\mathcal{M}$ carry over to $\mathcal{M}'_N$. In particular, if Assumptions \ref{ass:nn}-\ref{ass:irr} 
hold for $\mathcal{M}$ then they hold for $\mathcal{M}'_N$ too,
and, if $\mathcal{M}$  is absorbing, then $\mathcal{M}'_N$ is also absorbing. \zsmargin{Thus we indicate with $X'^T$ its transient set of states.}{Why is this needed? It doesn't appear to be used anywhere. If you keep it, ``its'' is ambiguous. Do you mean $\mathcal{M}'_N$'s set of transient states?}
For a given realization $\omega$, consider now $c_t^{[d]}(\omega) = \sum_{i=0}^tc(x_i,u_i)$, i.e., the true cumulative cost of the
surrogate MDP problem without discretization. 
The following theorem establishes that even though the approximation error introduced by discretizing 
the running cost grows linearly with $t$, it is possible to bound it with arbitrary precision.
\begin{theorem}\label{th:approx}
For each $\varepsilon >0$ and each $t\in\{0,\ldots,d\}$, there exists a discretization step $\zeta$ such that 
$|\zeta y_t(\omega)-c_t^{[d]}(\omega)| < \varepsilon$.
\end{theorem}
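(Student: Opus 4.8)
The plan is to relate the discretized quantity $\zeta y_t(\omega)$ to the true cumulative cost $c_t^{[d]}(\omega)$ by unrolling the recursion that defines $y_t$. From the transition rule of $\mathcal{M}'_N$, at each step the $y$-component increases by $\lfloor c(x_i,u_i)/\zeta\rfloor$, so along any trajectory $\omega$ and for $t\le d$ one has $y_t(\omega)=\sum_{i=0}^{t-1}\lfloor c(x_i(\omega),u_i(\omega))/\zeta\rfloor$ (with the convention that the sum is empty, hence $0$, when $t=0$). Using the elementary bound $a/\zeta-1< \lfloor a/\zeta\rfloor \le a/\zeta$ termwise and summing over the at most $t$ terms, one gets
\[
c_t^{[d]}(\omega) - t\zeta \;<\; \zeta\, y_t(\omega)\;\le\; c_t^{[d]}(\omega),
\]
so $0\le c_t^{[d]}(\omega)-\zeta y_t(\omega) < t\zeta \le d\zeta$. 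Thus the per-step rounding error accumulates at most linearly in $t$, and is uniformly bounded by $d\zeta$ over all $t\in\{0,\dots,d\}$ and all trajectories $\omega$.

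The second step is to make this bound smaller than $\varepsilon$ by choosing $\zeta$ appropriately. Recall $\zeta=\min\{\underline K,\ d\bar K/N'\}$, where $N'$ is a free integer parameter. Since $\underline K>0$ is fixed by Assumption~\ref{ass:nn}, for $N'$ large enough we have $d\bar K/N' \le \underline K$, so $\zeta = d\bar K/N'$, which can be driven to $0$ by increasing $N'$. Hence, given $\varepsilon>0$, picking $N' > d^2\bar K/\varepsilon$ (and large enough that $d\bar K/N'\le\underline K$) yields $d\zeta = d^2\bar K/N' < \varepsilon$, and therefore $|\zeta y_t(\omega)-c_t^{[d]}(\omega)| < \varepsilon$ for every $t\in\{0,\dots,d\}$ and every $\omega$. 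This completes the argument.

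There is no real obstacle here; the statement is essentially a telescoping of the floor-rounding error, and the only mild subtlety is bookkeeping the off-by-one in the number of summands (the state at time $t$ has incurred costs at stages $0,\dots,t-1$) and confirming that one stays within the valid range $t\le d$ so that the surrogate dynamics are still in the "counting" regime rather than the absorbed regime. One should also note that $\zeta y_t(\omega)$ never overflows the grid $\mathbb{N}_N$, since $c_t^{[d]}(\omega)\le d\bar K$ implies $y_t(\omega)\le d\bar K/\zeta \le N$, so the augmented MDP is well-defined; this is consistent with the definition $N=\lceil d\bar K/\zeta\rceil$.
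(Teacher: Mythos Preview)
Your proposal is correct and follows essentially the same approach as the paper: both observe that each floor operation contributes at most $\zeta$ to the error, so after at most $d$ steps the accumulated error is bounded by $d\zeta$, which is then made smaller than $\varepsilon$ by choosing $\zeta$ small enough. The paper phrases this as a one-line induction $e(t+1)\le e(t)+\zeta$ and simply sets $\zeta=\varepsilon/d$, whereas you unroll the sum explicitly and additionally verify that the constrained form $\zeta=\min\{\underline K,\,d\bar K/N'\}$ can indeed be driven below $\varepsilon/d$ by taking $N'$ large; both arguments share the same (harmless) off-by-one ambiguity in matching the index range of $y_t$ to that of $c_t^{[d]}$.
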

{\em Proof}. 
Pick $\zeta = \varepsilon/d$.
Let $e(t) = c_t^{[d]}(\omega)-\zeta y_t(\omega)$ be the approximation error at time $t$. Note
that by definition $e(t)\geq 0$ and $e(0)= c_0^{[d]}(\omega)-\zeta y_0(\omega)=0$.
From the definition of the transition probability function, $P'$, it follows that $e(t+1) \leq e(t)+\zeta$,
which implies $e(d)\leq d\zeta = \varepsilon$. Since for $t>d$ we have
$e(t) = e(d)$, the claim follows. $\Box$\\

A key step towards the solution of problem in \eqref{eq:toapprox} is therefore to derive the statistical 
description of the discretized total cost $y_d(\omega)$ that is used to approximate $c^{[d]}(\omega)$. This objective can be
achieved by exploiting the occupancy measures for the state-augmented MDP $\mathcal{M}'$.
For $(x,y,z,u)\in \mathcal{K}'$, the occupancy measure  on $\mathcal{M}'$ induced by a policy $\pi$ and an initial distribution $\beta$ is given as:
\begin{align}
& \rho(x,y,z,u) = \label{eq:occupancy} \\
&  \sum_{t=0}^{\infty}\Pr{_{\beta}^{\pi}}[x_t(\omega)=x,y_t(\omega)=y,z_t(\omega)=z,u_t(\omega)=u].\nonumber
\end{align}
 The occupancy measure, $\rho$, is a 
vector in  $\mathbb{R}_{\geq 0}^{|\mathcal{K}'|}$, i.e., it is a vector with $|K'|$ non negative components.
 The set of legitimate occupancy vectors is constrained by the initial distribution $\beta$ and defined by the 
policy $\pi$. It is well known \cite{altman1999constrained} that these constraints can be expressed as follows:

\begin{align}
\sum_{(x',y',z')\in {X}'^T} \sum_{u\in A(x',y',z')}\rho(x',y',z',u)[\delta_{(x,y,z)}(x',y',z')- \nonumber \\
P'((x',y',z')|(x,y,z),u)]=\beta(x,y,z)~\forall (x,y,z)\in {X}'^T \nonumber
\end{align}
where $\delta_x(y)=1$ if and only if $y=x$.
For $0 \leq k \leq N$ we introduce random variables $\theta(k)$ with the property
that $\theta(k)=\Pr[y_d(\omega)=k]$.
 This is easily achieved using occupancy measures, i.e.,
$
\theta(k) =  \sum_{(x,y,z,u)\in \mathcal{K}'}I(y = k~\wedge z=d)\rho(x,y,z,u),
$
where $I(\cdot)$ is the indicator function equal to 1 when its argument is true, and 0 otherwise. 
Note that by definition $\theta(k)$ is equal to $\Pr[y_d(\omega)=k]$,
and by Theorem \ref{th:approx} $y_d(\omega)$ approximates $c^{[d]}(\omega)$ with arbitrary precision.
Combining  the above definitions we then get to the following problem whose
solution approximates the solution to \eqref{eq:toapprox}:
\begin{align}\label{eq:CVAR_MDP}
&\min_{\rho,\theta} \min_{s \in [0,\bar{K}d]} s+\frac{1}{1-\tau} \sum_{y \in \mathbb{N}_N}(y-s)^+\theta(y)  \\
&\textrm{s.t.} \nonumber \\
&\sum_{(x',y',z')\in {X}'^T} \sum_{u\in A(x',y',z')}\rho(x',y',z',u)[\delta_{(x,y,z)}(x',y',z')- \nonumber \\
&P'((x',y',z')|(x,y,z),u)]=\beta(x,y,z)~\forall (x,y,z)\in {X}'^T \nonumber\\
 & ~ ~ ~ \theta(k) =\!\!\!\!\!  \sum_{(x,y,z,u)\in \mathcal{K}'}\!\!\!\!\!I(y = k~\wedge z=d) \rho(x,y,z,u),\, 0 \leq k \leq N.\nonumber
\end{align}
When comparing this last optimization problem with \eqref{eq:toapprox}, 
the reader will note that the variable $s$ is constrained in the interval $[0,\bar{K}d]$. Indeed, the objective
function is continuous with respect to $s$, and it is straightforward to verify that the partial derivative of
the objective function with respect to $s$ is negative for $s<0$ and positive for $s>\bar{K}d$. 
The objective function given in Eq. \eqref{eq:CVAR_MDP} is concave with respect to $\theta(y)$ and is defined over 
a convex feasibility set \cite{Jain2014}. 
To the best of our knowledge, there exist no efficient methods to determine the global minimum for this class of problems.
Hence, the problem is approximately solved fixing different values of $s$ within the range $[0,\bar{K}d]$, and then
solving the corresponding linear problem over the optimization variables $\rho$ and $\theta$.
Comparing the problem in Eq. \eqref{eq:CVAR_MDP} with the one  in Eq. \eqref{eq:optim}
 one might initially think that the objective function in  
 Eq. \eqref{eq:optim} does not depend on the policy $\pi$.
However, the dependency on $\pi$ is carried over by the occupancy measure $\rho$, as evident from Eq. \eqref{eq:occupancy}. Moreover,
it is well known from the theory of constrained MDPs \cite{altman1999constrained} that there is a one to one correspondence between
policies and occupancy measures, i.e., every policy defines a unique occupancy measure and every occupancy measure induces a policy.


\section{Numerical Experiments}
\label{sec:simulations}

To illustrate the performance of the proposed algorithm, we adopt the rapid deployment scenario considered in \cite{carpinIROS2014,CarpinASME2014}. 
A graph is used to abstract and model the connectivity of a given map of an environment 
(see, e.g., \cite{carpinIROS2008}).
One robot is positioned at a start vertex and is
tasked to reach the goal vertex within a given temporal deadline while providing some guarantee about its
 probability of successfully completing the task.
When moving from vertex
to vertex, the robot can choose from a set of actions, each trading off 
speed with probability of success. In particular,  actions with
rapid transitions between two vertices have higher probability of failure;
and conversely when the robot moves slowly between two vertices it has a higher 
probability of success. In this scenario, {\em failure} means that the robot
does not move (e.g., fails to pass through an opening), so elapsed time increases
without making progress towards the goal.
With a given temporal deadline $T$ and success probability $P$, the robot is tasked to reach
the target vertex ``safely" (such that the true mission success probability is at least $P$), while satisfying the temporal constraint.
From a design perspective it is of interest to know if there exists a policy $\pi$ 
achieving this objective, and to compute it. If the policy does not exist, it is
of interest to know how to modify the parameters in order to make the
task feasible.

In our previous work we solved this problem by modeling it 
using Constrained Markov Decision Processes (CMDP).
In the CMDP approach, one maximizes the probability of success
while imposing a constraint on the temporal deadline.
 However, this method only returns risk-neutral policies, i.e., the resultant policies only guarantee that the temporal deadline is met in expectation, and there is no explicit control on the tail probability of the constraint.
As a radical departure from the original problem formulation, the AVaR minimization method proposed in Eq. \eqref{eq:optim} searches for a policy that is feasible with respect to the temporal deadline constraint\footnote{For any random variable $Z$ with finite expectation,  $\textrm{AVaR}_\tau(Z)\geq \mathbb E[Z]$ for $\tau\in[0,1]$. Therefore, if the solution to the AVaR minimization problem is bounded above by the temporal deadline, then the corresponding minimizer is also a feasible policy to the original problem.} and systematically controls the worst-case variability of total travel time. Note that a policy with low success probability will have large tail probability in total travel time even if the expected temporal deadline is met. Therefore the optimal policies from AVaR minimization  will have \emph{high success probability}. This motivates the application of AVaR minimization to rapid robotic deployment.
First, note that, in the devised setting, the robot will eventually reach the final goal with positive probability.
However, due to possible failures one cannot put an a-priori bound on the
random total travel time. Therefore, the total cost criterion is indeed a
natural choice for this task. Moreover, Assumptions \ref{ass:nn} and \ref{ass:irr}
are easily justified because the immediate cost function (i.e., time to move) is always positive and
 the global reachability property follows from the graph structure.

To illustrate the performance of risk-averse deployment, two different policies are compared. Here both policies are computed using unconstrained stochastic control methodologies for which the immediate cost is the travel time between two vertices, and the actions correspond to all possible node transitions on the graph.
The first is the classic risk-neutral policy obtained with 
value iteration. The second is a risk averse policy obtained with
the algorithm presented in this paper using $\tau=0.95$.
For each policy, 1000 executions are run, and the distribution of total travel time is reported.
Figures \ref{fig:riskneutral} and \ref{fig:riskaverse} show the
distributions for the two cases. The risk-neutral policy obtains 
a lower expected cost, but has a longer tail, as evidenced
by the 61 instances with a cost larger than or equal to 15 (notice that $T=15$ is the desired time of completion in this example).
Moreover, as evidenced by the shape of the histogram, costs are more spread out.
The risk averse policy, on the other hand, results in less variability as desired. Less than 30 instances have a cost 
larger or equal than 15, a reduction of the weight of the tail by more than one half. 
\begin{figure}[tb]
\centering
\includegraphics[width=0.75\linewidth]{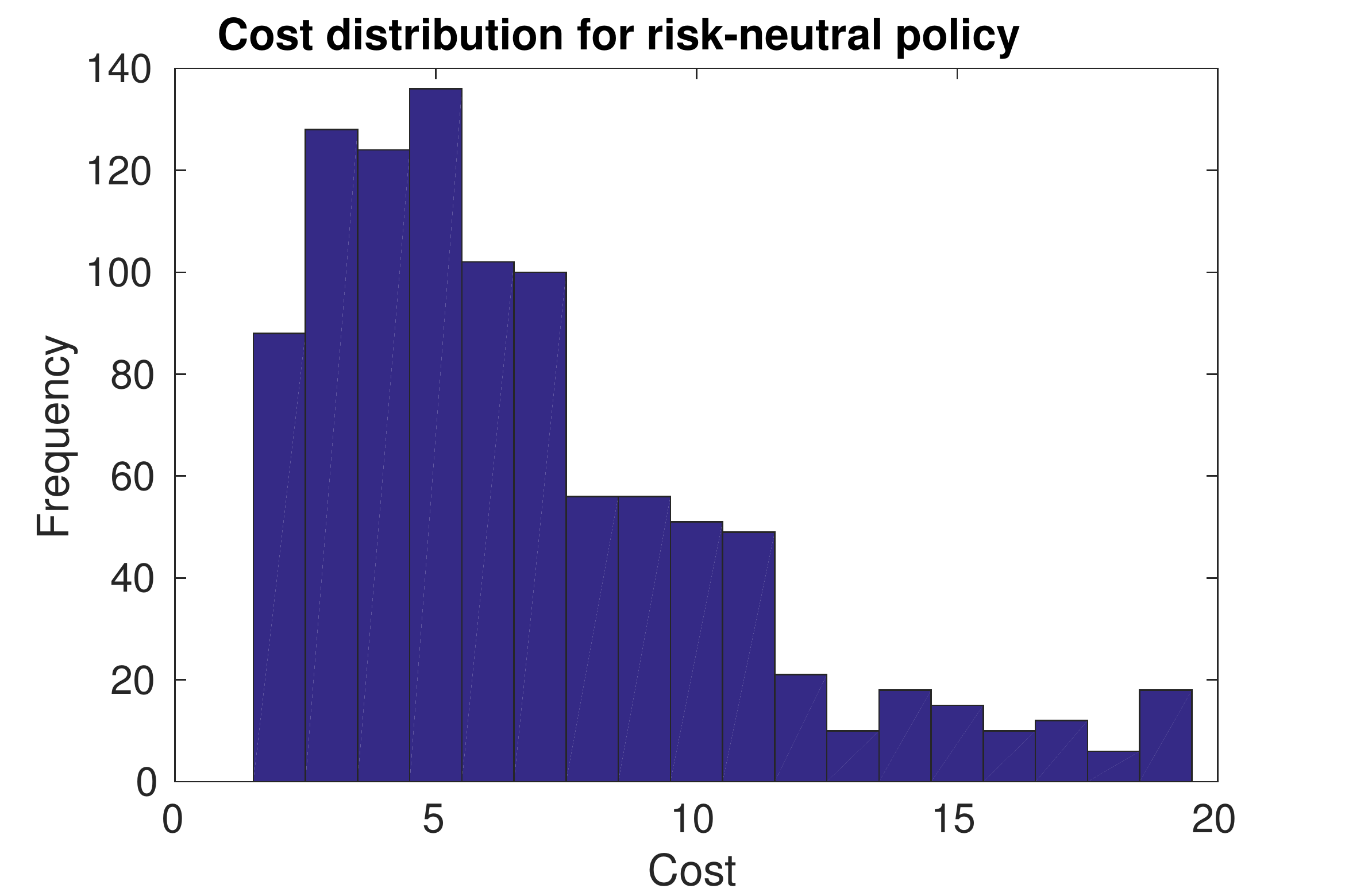}
\caption{Cost distribution for a risk-neutral policy}
\label{fig:riskneutral}
\end{figure}
\begin{figure}[tb]
\centering
\includegraphics[width=0.75\linewidth]{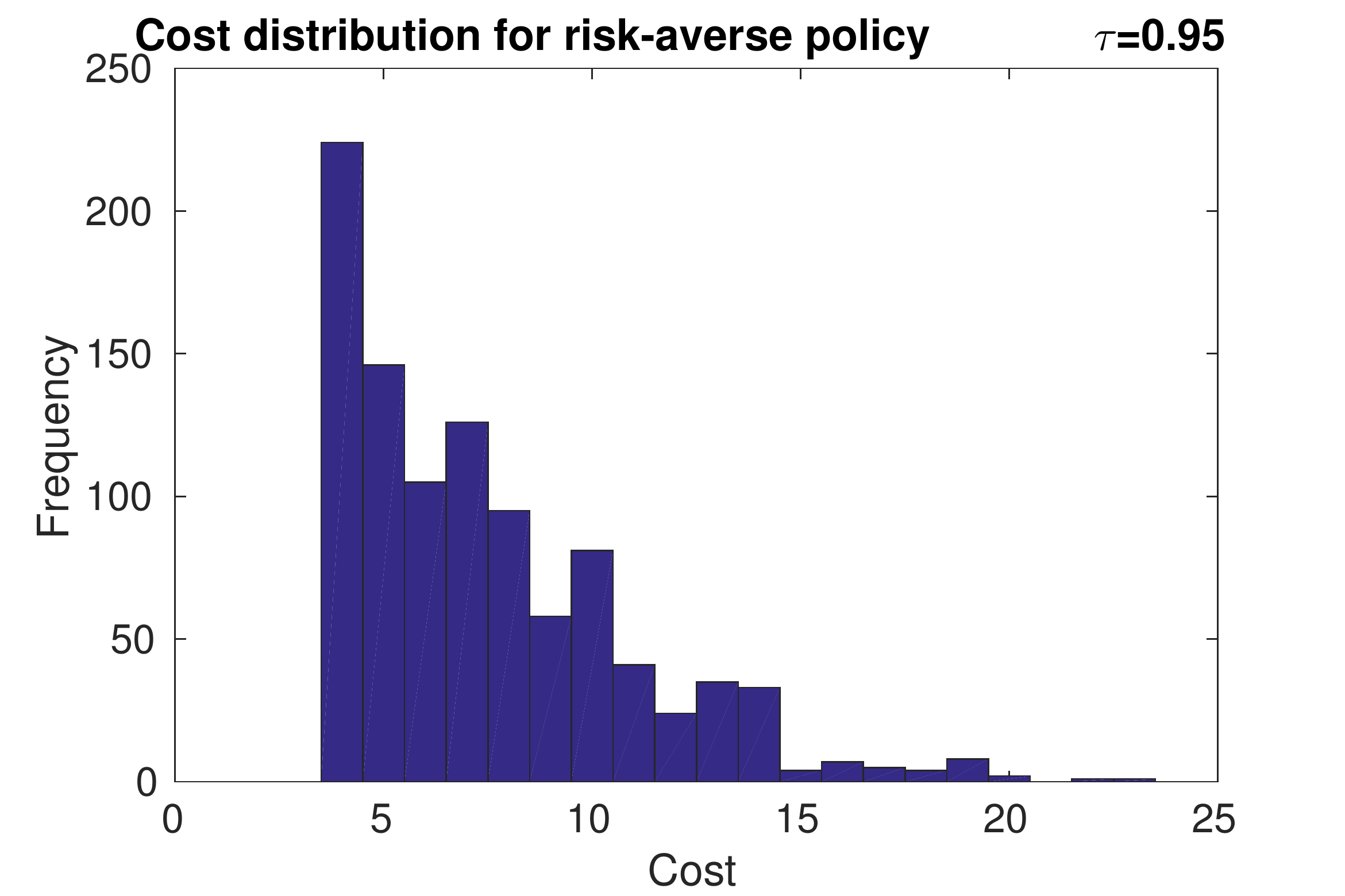}
\caption{Cost distribution for a risk averse policy with $\tau=0.95$.}
\label{fig:riskaverse}
\end{figure}
Importantly, when computing a risk-neutral policy using classic methods like 
policy iteration or value iteration, one is merely provided with a policy that 
minimizes the expected cost (in our case time to completion), and no additional information is readily available.
With our approach instead, one 
not only obtains a policy minimizing the AVaR criterion, but a statistical description of the costs is also obtained as a byproduct. That is to say, for
each discretized completion time $k$, the probability $\Pr[c^{[d]}=k]$ is computed 
as well, thus unveiling the relationship between the time to complete
the deployment task and its probability.
This is shown in Figure \ref{fig:probcomparison} for different values of 
$\tau$.
 Hence, if the computed policy 
does not meet the desired performance, the designer has  information
on how to tune  $T$ and $P$.

\begin{figure}[tb]
\centering
\includegraphics[width=0.8\linewidth]{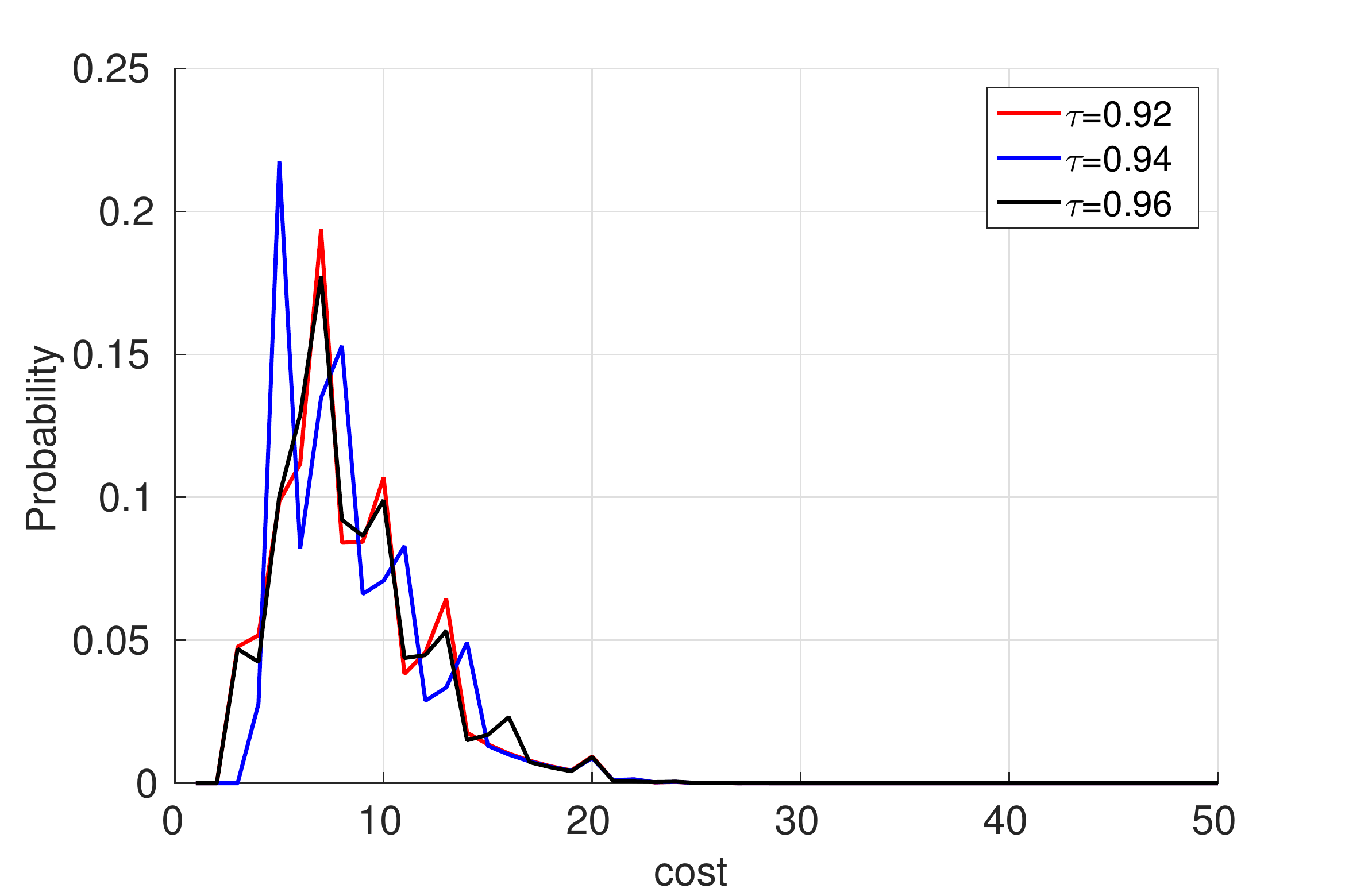}
\caption{Comparison of probability cost distribution for different values of $\tau$.}
\label{fig:probcomparison}
\end{figure}

\section{Conclusions}
\label{sec:conclusions}
In this paper we have considered how 
risk aversion in MDPs can be introduced jointly  with the AVaR risk metric under the total cost criterion.
Our results
advance the state of the art because AVaR has only been previously considered in MDPs
with finite horizon
or discounted infinite horizon cost criteria. Such extension is \emph{important} as the total cost criterion appears as a natural model for  robotic applications, and is \emph{non-straightforward} as current algorithms, e.g., from \cite{Jain2014} and \cite{Ott2011}, only work with bounded cumulated costs (which is not the case for total cost formulations). Under two mild assumptions, an approximation algorithm with provable sub-optimality gap was provided.
Furthermore, a rapid deployment scenario was used to demonstrate that
risk-aversion gives more informative policies when compared to
traditional risk-neutral formulations.
While our findings focus on risk averse MDPs with an AVaR risk metric, 
our approach can be easily extended along multiple dimensions. In particular, by exploiting the results presented in \cite{Jain2014}, it is
possible to use our approximation for a broader range of risk metrics, i.e.,
metrics that are uniformly continuous and law invariant. Moreover, 
since the algorithm we considered is based on occupancy measures,
it can be easily extended to the CMDP case. This will be the focus of future work.


\begin{thebibliography}{10}


\bibitem{altman1999constrained}
E.~Altman.
\newblock {\em Constrained Markov Decision Processes}.
\newblock Stochastic modeling. Chapman \& Hall/CRC, 1999.

\bibitem{artzner1999coherent}
P.~Artzner, F.~Delbaen, J.~Eber, and D.~Heath.
\newblock Coherent Measures of Risk.
\newblock \emph{Mathematical Finance}, 9\penalty0 (3):\penalty0 203--228, 1999.

\bibitem{Ott2011}
N.~B\"aurle and J.~Ott.
\newblock {M}arkov {D}ecision {P}rocesses with {A}verage-{V}alue-at-{R}isk
  Criteria.
\newblock {\em Mathematical Methods of Operations Research}, 74(3):361--379,
  2011.

\bibitem{BertsekasDPVol12}
D. Bertsekas.
\newblock {\em Dynamic Programming \& Optimal Control}, volume 1 \& 2.
\newblock Athena Scientific, 2005.

\bibitem{borkar2014risk}
V.~Borkar and R.~Jain.
\newblock Risk-constrained {M}arkov Decision Processes.
\newblock \emph{IEEE Transaction of Automatic Control}, 59\penalty0
  (9):\penalty0 2574 -- 2579, 2014.


\bibitem{carpinIROS2014}
S.~Carpin, M.~Pavone, and B. Sadler.
\newblock Rapid Multirobot Deployment with Time Constraints.
\newblock In {\em Proc. of the {IEEE/RSJ} Int. Conf. on
  Intelligent Robots and Systems}, pages 1147--1154, 2014.

\bibitem{chow2014cvar}
Y.~Chow and M.~Ghavamzadeh.
\newblock Algorithms for {CVaR} Optimization in {MDPs}.
\newblock In \emph{NIPS 27}, pages
  3509--3517, 2014.
    
\bibitem{CarpinASME2014}
Y. Chow, M.~Pavone, B. Sadler, and S.~Carpin.
\newblock Trading Safety Versus Performance: Rapid Deployment of Robotic Swarms
  with Robust Performance Constraints.
\newblock {\em {ASME} Journal of Dynamical Systems, Measurements and Control},
 137(3):031005, 2015.

\bibitem{HanbookMDP}
E. Feinberg.
\newblock {\em Handbook of {M}arkov {D}ecision {P}rocesses: Methods and
  Applications}.
\newblock Springer, 2012.


\bibitem{Filar95PP}
J.~Filar, D.~Krass, and K.~Ross.
\newblock Percentile Performance Criteria for Limiting Average {M}arkov
  Decision Processes.
\newblock \emph{IEEE Transaction of Automatic Control}, 40\penalty0
  (1):\penalty0 2--10, 1995.

\bibitem{gallager2013stochastic}
R. Gallager.
\newblock{\em Stochastic Processes: Theory for Applications}.
\newblock{Cambridge University Press, 2013.}



\bibitem{Howard1972Risk}
R.~Howard and J.~Matheson.
\newblock Risk-sensitive {M}arkov Decision Processes.
\newblock \emph{Management Science}, 18\penalty0 (7):\penalty0 356--369, 1972.

\bibitem{Jain2014}
W.~Haskell and R.~Jain.
\newblock A Convex Analytic Approach to Risk-aware Markov Decision Processes.
\newblock {\em {SIAM} Journal of Control and Optimization}, 2014.

\bibitem{carpinIROS2008}
A.~Kolling and S.~Carpin. 
\newblock Extracting Surveillance Graphs from Robot Maps. 
\newblock {\em Proc. of the IEEE/RSJ Int. Conf. on Intelligent Robots and Systems}, 2323-2328, 2008.

\bibitem{mannor2013}
S.~Mannor and J. Tsitsiklis.
\newblock Algorithmic Aspects of Mean-variance Optimization in {M}arkov
  Decision Processes.
\newblock {\em European journal of Operational Research}, 231:645--653, 2013.

\bibitem{prashanth2014policy}
L.~Prashanth.
\newblock Policy Gradients for {CVaR-constrained MDPs}.
\newblock In \emph{Algorithmic Learning Theory}, pages 155--169. Springer,
  2014.

\bibitem{RockafellarCVAR}
R. Rockafeller and S.~Uryasev.
\newblock Optimization of Conditional Value at Risk.
\newblock {\em Journal of Risk}, 2:21--41, 2000.

\bibitem{RU2002}
R. Rockafeller and S.~Uryasev.
\newblock Conditional Value-at-risk for General Loss Distributions.
\newblock {\em Journal of Banking Finance}, 26:1443--1471, 2002.

\bibitem{serraino2013conditional}
G.~Serraino and S.~Uryasev.
\newblock Conditional Value-at-risk {(CVaR)}.
\newblock In \emph{Encyclopedia of Operations Research and Management Science},
  pages 258--266. Springer, 2013.

\bibitem{DPB-JNT:91}
D. P. Bertsekas and J. N. Tsitsiklis.
\newblock An Analysis of Stochastic Shortest Path Problems.
\newblock In \emph{Mathematics of Op. Res.},
16(3):580--595, 1991.
  
\bibitem{HY-DPB:12}
H. Yu and D. P. Bertsekas.
\newblock Q-learning and policy iteration algorithms for stochastic shortest path problems.
\newblock In \emph{Annals of Operations Research},
208(1):95--132, 2012.

\bibitem{EAF-JH:15}
E. A. Feinberg and J. Huang.
\newblock On the Reduction of Total-Cost and Average-Cost MDPs to Discounted MDPs.
\newblock In \emph{Technical Report, available at arXiv:1507.00664},
2015.

\bibitem{YC-AT-SM-MP:15}
Y. Chow and A. Tamar and S. Mannor and M. Pavone.
\newblock Risk-Sensitive and Robust Decision-Making: a CVaR Optimization Approach.
\newblock In \emph{Advances in Neural Information Processing Systems},
2015.


\bibitem{sobel_variance_1982}
M.~Sobel.
\newblock The Variance of Discounted {M}arkov Decision Processes.
\newblock \emph{Journal of Applied Probability}, pages 794--802, 1982.
  
\bibitem{tamar2015optimizing}
A.~Tamar, Y.~Glassner, and S.~Mannor.
\newblock Optimizing the {CVaR} via Sampling.
\newblock In \emph{AAAI}, 2015.

\bibitem{ProbabilisticRobotics}
S.~Thrun, W.~Burgard, and D.~Fox.
\newblock {\em Probabilistic Robotics}.
\newblock MIT Press, 2006.

\end{thebibliography}
\end{document}